\theoremstyle{plain}
\newtheorem{theorem}{Theorem}[section] 
\newtheorem{lemma}[theorem]{Lemma}
\newtheorem{proposition}[theorem]{Proposition}
\newtheorem{corollary}[theorem]{Corollary}
\theoremstyle{definition}
\newtheorem{definition}[theorem]{Definition}
\newtheorem{remark}[theorem]{Remark}
\def\Fq{{\mathbb F}_q}
\def\vx{{\bf x}}
\def\vu{{\bf u}}
\def\Bk{\mathcal{B}_1}
\def\Bn{\mathcal{B}_2}
\def\nqdd{\frac{|\GL_d(\Fq)|}{\prod_{f\in S(\I)}c_f(\Phi_\I(f))}}
\def\I{\mathcal{I}}
\def\IT{\mathfrak{I}(T)}
\newcommand{\diag}{\operatorname{diag}}
\newcommand{\rank}{\operatorname{rank}}
\newcommand{\GL}{\operatorname{GL}}
\def\imod#1{\allowbreak\mkern10mu({\operator@font mod}\,\,#1)}
\title[Linear transformations with prescribed invariant factors]{The number of linear transformations defined on a subspace with given invariant factors}
\author{Samrith Ram} 
\address{Indraprastha Institute of Information Technology Delhi (IIIT-Delhi), New Delhi, India.}
\email{samrith@gmail.com}
\date{\today}
\keywords{invariant factors, rectangular matrix polynomial, reachable pair, zero kernel pair, Gerstenhaber-Reiner formula, matrix conjugacy}
\subjclass[2010]{05A05, 05A15, 15A04, 15B33}
\begin{document} 

\begin{abstract}
Given a finite-dimensional vector space $V$ over the finite field $\mathbb{F}_q$ and a subspace $W$ of $V$, we consider the problem of counting linear transformations $T:W\to V$ which have prescribed invariant factors. The case $W=V$ is a well-studied problem that is essentially equivalent to counting the number of square matrices over $\mathbb{F}_q$ in a conjugacy class and an explicit formula is known in this case. On the other hand, the case of general $W$ is also an interesting problem and there hasn't been substantive progress in this case for over two decades, barring a special case where all the invariant factors of $T$ are of degree zero. We extend this result to the case of arbitrary $W$ by giving an explicit counting formula. As an application of our results, we give new proofs of some recent enumerative results in linear control theory and derive an extension of the Gerstenhaber-Reiner formula for the number of square matrices over $\mathbb{F}_q$ with given characteristic polynomial. 
\end{abstract} 
\maketitle
\section{Introduction}
Let $\Fq$ denote the finite field with $q$ elements and let $\Fq[x]$ denote the ring of polynomials in the indeterminate $x$ with coefficients in $\Fq$. Throughout this paper, $n,k$ denote positive integers with $n\geq k$. For any ring $R$, we denote by $M_{n,k}(R)$ the set of all $n \times k$ matrices with entries in $R$ and by $M_n(R)$ the set of all square $n\times n$ matrices with entries in $R$. Let $\GL_n(\Fq)$ denote the general linear group of nonsingular matrices of $M_n(\Fq)$. 

 Given any nonzero polynomial matrix $A\in M_{n,k}(\Fq[x])$, there exist invertible matrices $P\in M_n(\Fq[x])$ and $Q\in M_k(\Fq[x])$ such that the product $PAQ$ is of the form 
$$
\begin{bmatrix}
  p_1 & 0 & 0& \quad & \cdots & \quad &0\\
  0   & p_2&0& \quad & \cdots & \quad &0\\
  0   & 0  &\ddots& \quad &\quad &\quad & 0\\ 
 \vdots & \quad & \quad &p_t & \quad & \quad & \vdots\\
 \quad & \quad & \quad &\quad & 0 & \quad & \quad\\
 \quad & \quad & \quad &\quad & \quad & \ddots & \quad\\
0 & \quad & \quad &\cdots & \quad & \quad & 0\\
\end{bmatrix}
$$
 where all off-diagonal entries are zero and the nonzero entries on the diagonal are monic polynomials $p_i (1 \leq i \leq t)$ satisfying $p_i\mid p_{i+1}$ for $1\leq i <t$. This diagonal form is known as the Smith normal form of $A$. We express this concisely by writing $A\sim \diag_{n,k}(p_1,\ldots,p_t)$. The $p_i$ are known as the invariant factors of $A$ and are given by
$$
p_i=\frac{\delta_i(A)}{\delta_{i-1}(A)},
$$
where $\delta_i(A)$ denotes the $i$\textsuperscript{th} determinantal divisor of $A$ and equals the greatest common divisor of all $i\times i$ minors of the matrix $A$. 

Let us denote by $I_{n,k}$ the element of $M_{n,k}(\Fq)$ whose $(i,j)$-th entry is 1 for $i=j$ and zero otherwise. In this paper, we are mainly interested in counting the number of matrices $B\in M_{n,k}(\Fq)$ for which the matrix polynomial $xI_{n,k}-B$ has a prescribed list of invariant factors. 
More precisely, for any $k$-tuple of invariant factors, i.e., a tuple $\I=(p_1,\ldots,p_k)$ of monic polynomials in $\Fq[x]$ such that $p_i\mid p_{i+1}(1\leq i \leq k-1)$, define 
 $$
N_q(n,k;\I):=\#\left\{B\in M_{n,k}(\Fq): xI_{n,k}-B \sim \diag_{n,k}(p_1,\ldots,p_k)\right\}.
$$
  We are interested in a formula for $N_q(n,k;\I)$. In fact, the problem of determining $N_q(n,n;\I)$ is equivalent to counting the number of square $n\times n$ matrices over $\Fq$ in a conjugacy class. This problem has been studied by Kung \cite{Kung1981} and Stong \cite{Stong1988} 
among others, and an explicit formula (see \eqref{eq:nqnn}) due to Philip Hall can be found in Stanley \cite{Stanley2012}. 

On the other hand, rectangular matrices arise in many contexts in linear control theory and matrix completion problems and are also of considerable interest. We refer to Cravo \cite[Thms. 15, 32]{Cravo2009} for specific examples of matrix completion problems.  For another example, suppose $k<n$ and $\I_0$ denotes the $k$-tuple $(1,\ldots,1)$. Determining $N_q(n,k;\I_0)$ is equivalent to the problem of counting the number of zero kernel pairs over a finite field. We refer to Gohberg et al. \cite[Sec.~X.1]{Gohbergetal1995} or \cite{zerokernel} for the definition of zero kernel pairs. 
The problem of counting zero kernel pairs goes back at least to the work of Koci\textpolhook{e}cki and Przyłuski \cite{KocPrz1989} who considered the equivalent problem of determining the number of reachable pairs (see Definition \ref{reachable}) of matrices over a finite field. This problem has recently been settled by Helmke et al. \cite[Thm. 1]{Helmkeetal2015} who also solve the more general problem of determining the number of matrix pairs with an $r$-dimensional reachability subspace (see Definition \ref{matrixpair}). 

 The main result of this paper is Theorem \ref{th:main} which is a counting formula for the number of linear transformations defined on a subspace which have a given invariant subspace and prescribed invariant factors. We use this result to deduce Theorem~\ref{snfcount} which gives a formula for $N_q(n,k;\I)$ for an arbitrary $k$-tuple $\I$ of invariant factors. 
We give further applications of our results in Sections \ref{pairs} and \ref{sec:ghformula}. In Section~\ref{pairs} we give an alternate proof of the formula of Helmke et al. (Theorem~\ref{Helmke}) for the number of matrix pairs with an $r$-dimensional reachability subspace. Studying linear maps defined on a subspace allows us to extend some combinatorial results on square matrices to the setting of rectangular matrices. As an illustration, we prove an extension of the Gerstenhaber-Reiner formula for the number of square matrices with given characteristic polynomial in Section~\ref{sec:ghformula}.

\section{Preliminaries and Notation} 
It is a well-known fact that two matrices $A,A'\in M_n(\Fq)$ are similar, i.e., there exists $P\in \GL_n(\Fq)$ with $A'=PAP^{-1}$, if and only if they have the same invariant factors. More precisely, if 
$$
xI_n-A\sim \diag_{n,n}(p_1,\ldots,p_n) \mbox{ and } xI_n-A'\sim \diag_{n,n}(p_1',\ldots,p_n'),
$$
then $A$ is similar to $A'$ if and only if $p_i=p_i'$ for $1\leq i \leq n$. Therefore, the conjugacy class of all matrices similar to $A$ is uniquely determined by the invariant factors of $A$. To give an expression for the number of matrices in the conjugacy class determined by an $n$-tuple of invariant factors, we introduce some notation.

Following Stanley \cite[Sec. 1.10]{Stanley2012}, we denote by $\mathrm{Par}$ the set of all partitions of all nonnegative integers. We write $\lambda \vdash n$ if $\lambda$ is a partition of $n$.
 Given any partition $\lambda\in \mathrm{Par}$, let $\lambda'=(\lambda_1',\lambda_2',\ldots)$ denote the conjugate partition to $\lambda$ and let $m_i=m_i(\lambda)=\lambda_i'-\lambda_{i+1}'$ denote the number of parts of $\lambda$ of size $i$. Let
$$
h_i=h_i(\lambda)=\lambda_1'+\lambda_2'+\cdots+\lambda_i'.
$$
For $\lambda\in \mathrm{Par}$ and any irreducible $f\in \Fq[x]$ of degree $d$, define
$$
c_f(\lambda):=\prod_{i\geq 1}\prod_{j=1}^{m_i}\left(q^{dh_i}-q^{d(h_i-j)}\right).
$$
For $f,g\in \Fq[x]$ with $f$ irreducible and $g$ nonzero, set
$$
\nu_f(g):=\max \{e:f^e\mid g\}.
$$
Given a tuple $\I=(p_1,\ldots,p_k)$ of invariant factors, let $S(\I)$ denote the set of all monic irreducible polynomials which divide $P=\prod_{i=1}^{k}p_i$.
To each $f\in S(\I)$, we associate a partition $\Phi_\I(f)\vdash \nu_f(P)$ by defining
$$
\Phi_\I(f):=\left(\nu_f(p_k),\nu_f(p_{k-1}),\ldots,\nu_f(p_r)\right),
$$
where $r=\min \{i:\nu_f(p_i)>0\}$. The following formula (essentially equivalent to \cite[Eq. 1.107]{Stanley2012}) gives the size of the conjugacy class corresponding to a tuple of invariant factors: 
\begin{equation}
  \label{eq:nqnn}
N_q(n,n;\I)=\frac{|\GL_n(\Fq)|}{\prod_{f\in S(\I)}c_f(\Phi_\I(f))}
\end{equation}
for any $n$-tuple $\I=(p_1,\ldots,p_n)$ of invariant factors with $\deg(p_1\cdots p_n)=n$. In the next section, we recall some basic facts about linear transformations defined on a subspace and then prove an extension (Theorem~\ref{snfcount}) of the above formula.

\section{Maps defined on a subspace with prescribed invariant factors}
\label{results}

Throughout this paper, we denote by $V$ an $n$-dimensional vector space over $\Fq$ and by $W$ a fixed $k$-dimensional subspace of $V$. We denote by $L(W,V)$ the space of all linear transformations from $W$ to $V$. Let $\Bk=\{v_1,\ldots,v_k\}$ be a fixed ordered basis for $W$ and let $\Bn=\{v_1,\ldots,v_n\}$ be an extension of this basis to an ordered basis of $V$. We define the invariant factors of a linear transformation defined on a subspace in terms of its matrix with respect to the bases $\Bk$ and $\Bn$.
\begin{definition}
Suppose $T\in L(W,V)$ and let $B\in M_{n,k}(\Fq)$ denote the matrix of $T$ with respect to the bases $\Bk$ and $\Bn$. The invariant factors of $T$ are the invariant factors of $xI_{n,k}-B$ viewed as a polynomial matrix in $M_{n,k}(\Fq[x])$.  
\end{definition}
Note that the above definition coincides with the usual definition of invariant factors of a linear transformation in the case $W=V$. The invariant factors of $T\in L(W,V)$ are independent of the specific choice of bases for $W$ and $V$ so long as the basis for $V$ is obtained as an extension of the basis for $W$. 
If $p_1,\ldots,p_k$ are the invariant factors of $T$, we write $\I_T=(p_1,\ldots,p_k)$. Though the terms `invariant factors' and `invariant polynomials' are often used interchangeably, some authors refer to the nonunit invariant factors of $T$ as the invariant polynomials of $T$.
\begin{remark}
While it is possible for an arbitrary element of $M_{n,k}(\Fq[x])$ to have less than $k$ invariant factors, a matrix polynomial of the form $xI_{n,k}-B$ for $B\in M_{n,k}(\Fq)$ always has $k$ invariant factors as $\delta_k(xI_{n,k}-B)\neq 0$. In fact, it is easily seen that $\delta_k(xI_{n,k}-B)$ is a polynomial of degree at most $k$.   
\end{remark}

\begin{definition}
For any $k$-tuple $\I$ of invariant factors, we define
$$
N_q(n,k;\I):=\#\left\{T\in L(W,V): \I_T=\I\right\}.
$$
\end{definition}

The above definition of $N_q(n,k;\I)$ is equivalent to the one given in the introduction. Given $T\in L(W,V)$, define
 $$\mathfrak{I}(T):= \mbox{ the maximal } T\mbox{-invariant subspace contained in } W.$$

The following proposition \cite[Thm. III.1.2]{Gohbergetal1995} relates the invariant factors of $T$ to the invariant factors of its restriction to $\mathfrak{I}(T)$.
\begin{proposition}
\label{prop:invpoly}
For $T\in L(W,V)$, let $T'$ denote the restriction of $T$ to its maximal invariant subspace $\IT$. Then $T$ and $T'$ have the same nonunit invariant factors.
\end{proposition}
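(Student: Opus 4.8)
\emph{The plan is to realize the invariant factors module-theoretically and then to identify the torsion submodule of the $\Fq[x]$-module attached to $T$ with the module attached to $T'$.}

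First I would pass to $\Fq[x]$-modules. Writing $W[x]=\Fq[x]\otimes_\Fq W$ and $V[x]=\Fq[x]\otimes_\Fq V$, consider the $\Fq[x]$-linear map $\phi\colon W[x]\to V[x]$ given by $\phi(p\otimes w)=xp\otimes w-p\otimes Tw$, whose matrix in the bases $\Bk,\Bn$ is $xI_{n,k}-B$. By the Smith normal form its cokernel $M=\operatorname{coker}\phi$ is isomorphic to $\bigoplus_{i=1}^{k}\Fq[x]/(p_i)\oplus\Fq[x]^{\,n-k}$ (all $p_i$ are nonzero, as noted in the Remark), so the nonunit invariant factors of $T$ are precisely the invariant factors of the torsion submodule $\operatorname{Tor}(M)$. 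Let $M'$ denote the torsion $\Fq[x]$-module obtained by letting $x$ act on $\IT$ as $T'$; its invariant factors are the nonunit invariant factors of $T'$. The whole statement therefore reduces to the module isomorphism $\operatorname{Tor}(M)\cong M'$.

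Next I would introduce the $\Fq$-linear map $\pi\colon V\to M$, $v\mapsto[1\otimes v]$. A degree count shows that $\phi$ raises degree by exactly one, so its image meets the degree-zero part of $V[x]$ trivially; hence $\pi$ is injective and every element of $M$ has at most one degree-zero lift. Since $x\,\pi(w)=\pi(Tw)$ for all $w\in W$ and $T$ maps $\IT$ into itself, the subspace $\pi(\IT)$ is $x$-stable, i.e.\ an $\Fq[x]$-submodule, and $\pi$ restricts to an isomorphism $M'\xrightarrow{\ \sim\ }\pi(\IT)$. As $M'$ is torsion, this already gives the easy inclusion $\pi(\IT)\subseteq\operatorname{Tor}(M)$.

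The crux is the reverse inclusion $\operatorname{Tor}(M)\subseteq\pi(\IT)$, and this is exactly where the maximality of $\IT$ enters. Given $0\neq\xi\in\operatorname{Tor}(M)$ with monic annihilator $g$ of degree $e\geq1$, I would choose a lift $v\in V[x]$ of $\xi$ of minimal degree $d$ and write $gv=\phi(\omega)$ with $\omega\in W[x]$. Comparing leading coefficients (using that $g$ is monic and that $\phi$ strictly raises degree) forces the leading coefficient of $v$ to equal that of $\omega$, and hence to lie in $W$; were $d\geq1$, subtracting a suitable $\phi(\cdot)$ would lower the degree, contradicting minimality. Thus $d=0$ and $\xi=\pi(v_0)$ with $v_0\in W$. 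I would then iterate: each $x^{j}\xi=\pi(T^{j}v_0)$ is again torsion and is represented by the degree-zero (hence minimal) lift $T^{j}v_0$, so the same argument gives $T^{j}v_0\in W$ for every $j\geq0$ (the vanishing case $x^{j}\xi=0$ forces $T^{j}v_0=0$ by injectivity of $\pi$). This says precisely that $v_0\in\IT$, proving $\operatorname{Tor}(M)\subseteq\pi(\IT)$. Combining the two inclusions yields $\operatorname{Tor}(M)=\pi(\IT)\cong M'$, and since isomorphic modules have identical invariant factors the proposition follows. The main obstacle is the minimal-degree/leading-coefficient argument of this last paragraph: controlling which lifts exist and forcing their leading coefficients into $W$ is the one place where the maximality of the invariant subspace is genuinely needed.
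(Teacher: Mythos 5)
Your argument is correct, and it is genuinely different from what the paper does: the paper offers no proof of Proposition \ref{prop:invpoly} at all, simply citing Gohberg, Lancaster and Rodman \cite[Thm.~III.1.2]{Gohbergetal1995}, whereas you give a complete, self-contained module-theoretic derivation. The reduction to showing $\operatorname{Tor}(M)\cong M'$ is sound (the Smith form of $xI_{n,k}-B$ identifies $M=\operatorname{coker}\phi$ with $\bigoplus_i\Fq[x]/(p_i)\oplus\Fq[x]^{n-k}$, and all $p_i$ are nonzero by the paper's Remark), the injectivity of $\pi$ via the degree-raising property of $\phi$ is right, and the key reverse inclusion is handled correctly: the monic-leading-coefficient comparison forces a minimal lift of a torsion class to have degree zero with representative in $W$, and the iteration (using uniqueness of degree-zero lifts, and the degenerate case $x^{j}\xi=0$ handled by injectivity of $\pi$) shows all $T$-iterates of $v_0$ remain in $W$, so that $v_0$ lies in the span of its orbit, a $T$-invariant subspace of $W$, hence in $\IT$ by maximality. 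The one step you state slightly tersely is the induction set-up: to even write $x^{j}\xi=\pi(T^{j}v_0)$ you need $T^{j-1}v_0\in W$ first, so the membership in $W$ and the identity must be established together by induction on $j$; this is implicit in your phrasing and causes no gap. What your route buys is a proof from first principles that makes visible exactly where maximality of $\IT$ is used; what the citation buys the paper is brevity and access to the stronger structural results of \cite{Gohbergetal1995} (e.g.\ the similarity invariants mentioned in Remark \ref{subspace_conjugacy}) within one framework.
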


\begin{corollary}
\label{cor:dim}
  For $T\in L(W,V)$ with $\I_T=(p_1,\ldots,p_k)$, we have
$$
\dim \mathfrak{I}(T)=\deg \prod_{i=1}^{k}p_i.
$$
\end{corollary}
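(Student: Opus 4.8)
The plan is to reduce to the classical square-matrix situation by restricting $T$ to its maximal invariant subspace and then invoking Proposition~\ref{prop:invpoly}. First I would set $d=\dim\mathfrak{I}(T)$ and observe that, since $\mathfrak{I}(T)$ is by construction a $T$-invariant subspace, the restriction $T'=T|_{\mathfrak{I}(T)}$ maps $\mathfrak{I}(T)$ into itself and is therefore a genuine endomorphism of the $d$-dimensional space $\mathfrak{I}(T)$. This places $T'$ squarely within the usual theory of invariant factors for square matrices, where the framework of characteristic polynomials is available.

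Next I would invoke the standard fact that for an endomorphism of a $d$-dimensional $\Fq$-vector space, the product of all $d$ invariant factors equals the characteristic polynomial, which is monic of degree exactly $d$. Since every unit invariant factor contributes a trivial factor of $1$, the product of the nonunit invariant factors of $T'$ is also a polynomial of degree $d$.

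Finally I would apply Proposition~\ref{prop:invpoly}, which guarantees that $T$ and $T'$ have the same nonunit invariant factors. Among the invariant factors $p_1,\ldots,p_k$ of $T$, those equal to $1$ contribute nothing to the product, so $\prod_{i=1}^{k}p_i$ coincides with the product of the nonunit invariant factors of $T$, hence with that of $T'$. Combining the two computations yields
$$
\deg\prod_{i=1}^{k}p_i=d=\dim\mathfrak{I}(T),
$$
as claimed.

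I do not anticipate a genuine obstacle here, since the result is essentially a bookkeeping consequence of Proposition~\ref{prop:invpoly} together with the square-matrix theory. The one point requiring care — rather than a real difficulty — is confirming that $T'$ is a bona fide endomorphism, so that the identification of the product of invariant factors with a degree-$d$ characteristic polynomial is legitimate; this follows immediately from the $T$-invariance built into the definition of $\mathfrak{I}(T)$.
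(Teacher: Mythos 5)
Your proposal is correct and follows essentially the same route as the paper: restrict $T$ to $\mathfrak{I}(T)$, note that the product of the invariant factors of the restriction is its characteristic polynomial of degree $d$, and use Proposition~\ref{prop:invpoly} to identify the nonunit invariant factors of $T$ with those of $T'$. The paper phrases the same argument via determinantal divisors ($\delta_d$ and $\delta_k$), but the content is identical.
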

\begin{proof}
Denote by $T'$ the restriction of $T$ to $\mathfrak{I}(T)$ and suppose $\dim \mathfrak{I}(T)=d$. Let $A\in M_{n,k}(\Fq)$ denote the matrix of $T$ with respect to the bases $\Bk$ and $\Bn$ and let $A'\in M_d(\Fq)$ denote the matrix of $T'$ with respect to some basis of $\IT$. The dimension of $\mathfrak{I}(T)$ is equal to the degree of the characteristic polynomial of $T'$, i.e.,
  \begin{align*}
    \dim \mathfrak{I}(T)=d&=\deg \det(xI_d-A')\\
                               &=\deg \delta_d(xI_d-A')\\
                               &=\deg \delta_k\left({xI_{n,k}-A}\right).
  \end{align*}
The last equality follows from Proposition \ref{prop:invpoly} since $T$ and $T'$ have the same nonunit invariant factors. Since $$\delta_k(xI_{n,k}-A)=\prod_{i=1}^{k}p_i,$$ the corollary follows.
\end{proof}

For subspaces $U\subseteq W \subseteq V$ and a tuple $\I$ of invariant factors, we define
$$
N(V,W,U;\I):=\#\{T\in L(W,V):\mathfrak{I}(T)=U\mbox{ and } \I_T=\I\}.
$$
Then \eqref{eq:nqnn} is equivalent to
\begin{equation}
\label{eq:nvvv}
N(V,V,V;\I)=\frac{|\GL_n(\Fq)|}{\prod_{f\in S(\I)}c_f(\Phi_\I(f))}  
\end{equation}
for any $n$-tuple $\I=(p_1\ldots,p_n)$ of invariant factors with $\deg \prod_{i=1}^np_i=n$. Note that $N(V,V,U;\I)=0$ for all $\I$ whenever $U$ is a proper subspace of $V$.

The next theorem gives an explicit formula for $N(V,W,U;\I)$ and is our main result. A crucial ingredient in the proof is the formula for the number of zero kernel pairs over a finite field. 

\begin{theorem}
\label{th:main}
Suppose $U$ is a subspace of $W$ and let $d, k, n$ denote the dimensions of $U,W,V$ respectively. Given a $k$-tuple $\I=(p_1,\ldots,p_k)$ of invariant factors with $\deg \prod_{i=1}^{k}p_i=d$, we have
\begin{equation*}
N(V,W,U;\I)=
\nqdd\prod_{i=d+1}^{k}(q^n-q^i).
\end{equation*}
\end{theorem}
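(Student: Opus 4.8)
The plan is to reduce the count to three independent choices by exploiting that every $T$ contributing to $N(V,W,U;\I)$ must leave the fixed subspace $U$ invariant. First I would pick a basis adapted to $U\subseteq W\subseteq V$: a basis of $U$, extended to a basis of $W$ spanning a complement $C$ of $U$ in $W$, and further extended to a basis of $V$ spanning a complement $D$ of $U$ in $V$ with $C\subseteq D$. Relative to $W=U\oplus C$ and $V=U\oplus D$, the matrix of any $T$ for which $U$ is $T$-invariant is block upper triangular,
\[
\begin{pmatrix} A & R\\ 0 & S\end{pmatrix},
\]
where $A$ is the matrix of $T|_U$, the block $R\in M_{d,k-d}(\Fq)$ records the $U$-component of $T$ on $C$, and $S$ represents the induced map. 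The goal is to show that the three blocks vary independently and to count the admissible $A$, $R$, $S$ separately.

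Next I would isolate the two defining conditions. For the invariant factors: since $\dim U=d=\deg\prod_i p_i$, Corollary~\ref{cor:dim} forces $T|_U$ to have characteristic polynomial $\prod_i p_i$, and Proposition~\ref{prop:invpoly} shows the nonunit invariant factors of $T$ coincide with those of $T|_U$. Hence, conditional on $\IT=U$, the requirement $\I_T=\I$ is equivalent to $A$ having invariant factors equal to the unique $d$-tuple $\I'$ with the same nonunit part as $\I$; crucially this depends only on $A$. By \eqref{eq:nvvv} the number of such $A$ is $\nqdd$, since $S(\I')=S(\I)$ and $\Phi_{\I'}=\Phi_\I$. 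For the maximality: passing to the quotient map $\bar T\colon W/U\to V/U$, one checks that $X\mapsto X/U$ is a bijection between $T$-invariant subspaces of $W$ containing $U$ and $\bar T$-invariant subspaces of $W/U$, so $\IT=U$ if and only if $\mathfrak{I}(\bar T)=0$. Under the identifications $W/U\cong C$ and $V/U\cong D$ the map $\bar T$ is exactly $S$, so this condition depends only on $S$ and says that $S$ has all invariant factors equal to $1$. Since $R$ enters neither condition, it is entirely free, contributing $q^{d(k-d)}$ choices.

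Finally I would assemble the product. The block $A$ contributes $\nqdd$, the block $R$ contributes $q^{d(k-d)}$, and the block $S$ ranges over maps from a $(k-d)$-dimensional space into an $(n-d)$-dimensional space with trivial maximal invariant subspace, i.e. over zero kernel pairs, of which there are $\prod_{j=1}^{k-d}(q^{n-d}-q^{j})$ by the formula for zero kernel pairs -- the crucial ingredient. Multiplying and using $q^{d}(q^{n-d}-q^{j})=q^{n}-q^{d+j}$ together with the reindexing $i=d+j$ collapses $q^{d(k-d)}\prod_{j=1}^{k-d}(q^{n-d}-q^{j})$ into $\prod_{i=d+1}^{k}(q^{n}-q^{i})$, which yields the claimed formula. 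The main obstacle is the decoupling step: one must verify rigorously that imposing $\I_T=\I$ and $\IT=U$ splits cleanly across the three blocks -- that the invariant factors of the full transformation never mix $A$ with $R$ or $S$ -- and this is precisely where Proposition~\ref{prop:invpoly} and the quotient correspondence do the essential work.
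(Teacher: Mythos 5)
Your proposal is correct and follows essentially the same route as the paper: the same basis adapted to $U\subseteq W\subseteq V$, the same block decomposition into a $d\times d$ block counted by \eqref{eq:nvvv}, a free $d\times(k-d)$ block, and a lower block that must form a zero kernel pair, with the same final reindexing. The only cosmetic difference is that you justify the zero-kernel-pair condition on the induced map via the quotient correspondence $\mathfrak{I}(T)=U\iff\mathfrak{I}(\bar T)=0$, whereas the paper derives the equivalent condition $\delta_{k-d}=1$ by factoring the $k$th determinantal divisor.
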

\begin{proof}
 The case $W=V$ follows easily from \eqref{eq:nvvv}. Now suppose $W\subsetneq V$. Let $T\in L(W,V)$ with $\I_T=(p_1,\ldots,p_k)$. We need to count the number of such $T$ for which $\mathfrak{I}(T)=U$. Let $U'$ be a complementary subspace of $W$ with $W=U\oplus U'$. Let $\{v_1,\ldots,v_{d}\}$ be an ordered basis for $U$ and let $\{v_{d+1},\ldots,v_k\}$ be an ordered basis for $U'$ so that $\mathcal{B}=\{v_1,\ldots,v_k\}$ is a basis of $W$. We extend this basis further to a basis $\mathcal{B}'=\{v_1,\ldots,v_n\}$ of $V$. We now count linear transformations by counting their matrices with respect to the bases $\mathcal{B}$ and $\mathcal{B}'$. Accordingly, let ${A \brack C}\in M_{n,k}(\Fq)$ be the matrix of $T$ with respect to the bases $\mathcal{B}$ and $\mathcal{B}'$ for some $A\in M_k(\Fq)$ and $C\in M_{n-k,k}(\Fq)$. We partition $A$ and $C$ further into blocks corresponding to the dimensions of $U$ and $U'$ and write the matrix of $T$ in the form
$$
{A \brack C}=
\begin{bmatrix}
  A_{11}& A_{12}\\
A_{21}&   A_{22}\\
C_{11} & C_{12}
\end{bmatrix},
$$
where $A_{11}\in M_d(\Fq)$. Since $U$ is $T$-invariant, it follows that $A_{21}={\bf 0}\in M_{k-d,d}(\Fq)$ and $C_{11}={\bf 0}\in M_{n-k,d}(\Fq)$. 
We thus have
\begin{equation}
\label{partitioned}
{xI_k-A \brack -C}=
\begin{bmatrix}
xI_d- A_{11}& -A_{12}\\
\bf{0}&  xI_{k-d}- A_{22}\\
  \bf{0} & -C_{12}
\end{bmatrix}.
\end{equation}
We now need to count the number of tuples $(A_{11},A_{12},A_{22},C_{12})$ for which the invariant factors of \eqref{partitioned} are precisely $p_1,\ldots, p_k$. The fact that $\deg(p_1\cdots p_k)=d$ forces $p_i=1$ for $ i \leq k-d$. Since the matrix of $T$ restricted to $U$ is precisely $A_{11}$, it follows from Proposition \ref{prop:invpoly} that the invariant factors of $xI_d-A_{11}$ are precisely $p_{k-d+1},\ldots,p_k$, i.e., 
\begin{equation}
  \label{eq:a11}
  xI_d-A_{11}\sim \diag_{d,d}(p_{k-d+1},\ldots,p_k).
\end{equation}
Note that $\det(xI_d-A_{11})=\prod_{i=1}^{k}p_i$. Consider the $k\times k$ minors of the matrix on the right hand side of \eqref{partitioned}. For such a minor to be nonzero, it necessarily contains all rows of the submatrix $[ xI_d-A_{11} \quad -A_{12}]$. Thus, the nonzero $k\times k$ minors of ${xI_k-A \brack -C}$ are all divisible by $\det(xI_d-A_{11})$. Now observe that
 \begin{equation}
   \label{eq:deltas}
\delta_k\left({xI_k-A \brack -C}\right)=\det(xI_d-A_{11}) \cdot \delta_{k-d}\left({xI_{k-d}-A_{22} \brack -C_{12}}\right).   
 \end{equation}
Since $\det(xI_d-A_{11})=\prod_{i=1}^{k}p_i$ which also equals the left hand side of \eqref{eq:deltas}, it follows that
\begin{equation} 
\label{a22c12}
\delta_{k-d}\left({xI_{k-d}-A_{22} \brack -C_{12}}\right)=1.  
\end{equation}

From the preceding discussion, it is clear that if $A_{11}$ is chosen to satisfy \eqref{eq:a11}, $A_{22},C_{12}$ are chosen to satisfy \eqref{a22c12} and $A_{12}$ is chosen arbitrarily in $M_{d,k-d}(\Fq)$, then $T$ has the desired invariant factors. Conversely, for any $T$ with invariant factors $p_1,\ldots,p_k$, the matrices $A_{11},A_{12},A_{22},C_{12}$  necessarily satisfy \eqref{eq:a11} and \eqref{a22c12}. The number of choices for $A_{11}$ is precisely $N_q(d,d;\I_{(d)})$ where $\I_{(d)}$ denotes the $d$-tuple formed by taking the last $d$ coordinates of $\I$. The pair $(C_{12},A_{22})\in M_{n-k,k-d}(\Fq)\times M_{k-d}(\Fq)$ may be chosen to be any zero kernel pair. This can be done in $\prod_{i=1}^{k-d}(q^{n-d}-q^i)$ ways (\cite[Cor. 3.9]{zerokernel}) and $A_{12}$ can be chosen in $q^{d(k-d)}$ ways. Thus, the tuple $(A_{11},A_{12},A_{22},C_{12})$ can be chosen in 
\begin{equation}
\label{fixUandI}
q^{d(k-d)} N_q(d,d;\I_{(d)}) \prod_{i=1}^{k-d}(q^{n-d}-q^i)=  N_q(d,d;\I_{(d)}) \prod_{i=d+1}^{k}(q^{n}-q^i) 
\end{equation}
ways. It is easy to see that $S(\I)=S(\I_{(d)})$ and $\Phi_\I(f)=\Phi_{\I_{(d)}}(f)$ for all $f\in S(\I)$. The result now follows by substituting for $N_q(d,d;\I_{(d)})$ from \eqref{eq:nqnn}. 
\end{proof} 

%

\begin{remark}
If we set $U$ to be the zero subspace in the hypothesis of Theorem~\ref{th:main}, then $d=0$ and $\I$ is necessarily the $k$-tuple $(1,\ldots,1)$. In this case $\I_{(0)}$ denotes the null tuple and we adopt the convention that $N_q(0,0,\I_{(0)})=1$ and $|\GL_0(\Fq)|=1$ in the counting formula. 
 We then obtain
$$
N(V,W,\{0\},(1,\ldots,1))=\prod_{i=1}^{k}(q^n-q^i),
$$
which is an expression for the number of $T\in L(W,V)$ which have no nonzero invariant subspace (see \cite[Prob. 1.4]{zerokernel}).
\end{remark}

The number of $d$-dimensional subspaces of a $k$-dimensional vector space over $\Fq$ is well-known and given by the $q$-binomial coefficient
$$
{k \brack d}_q:=\frac{(q^k-1)(q^{k-1}-1)\cdots (q^{k-d+1}-1)}{(q^d-1)(q^{d-1}-1)\cdots (q-1)}.
$$
The following theorem gives a formula for the number of linear transformations defined on a subspace with a prescribed list of invariant factors.
\begin{theorem}
\label{snfcount}
Let $\I=(p_1,\ldots,p_k)$ be a $k$-tuple of invariant factors and suppose $\deg \prod_{i=1}^{k}p_i=d$. We have
  \begin{equation*}
    N_q(n,k;\I)={k \brack d}_q \nqdd \prod_{i=d+1}^{k}(q^n-q^i).
  \end{equation*}
\end{theorem}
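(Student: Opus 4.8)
The plan is to derive $N_q(n,k;\I)$ from Theorem~\ref{th:main} by summing the refined counts $N(V,W,U;\I)$ over all candidate maximal invariant subspaces $U$. First I would note that any $T\in L(W,V)$ with $\I_T=\I$ has a well-defined maximal $T$-invariant subspace $\mathfrak{I}(T)\subseteq W$, and by Corollary~\ref{cor:dim} its dimension is exactly $d=\deg\prod_{i=1}^{k}p_i$. Consequently the set $\{T\in L(W,V):\I_T=\I\}$ splits, according to the value of $\mathfrak{I}(T)$, into classes indexed by the $d$-dimensional subspaces $U$ of $W$. Since $\mathfrak{I}(T)$ is uniquely determined by $T$, distinct subspaces yield disjoint classes, and since every such $T$ contributes a $d$-dimensional $\mathfrak{I}(T)$, none is omitted. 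This gives a genuine partition
$$
N_q(n,k;\I)=\sum_{\substack{U\subseteq W\\ \dim U=d}} N(V,W,U;\I).
$$

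Next I would invoke Theorem~\ref{th:main}. The decisive observation is that the formula it supplies for $N(V,W,U;\I)$ depends on $U$ only through its dimension $d$ and not on the particular subspace, so every summand above equals the common value $\nqdd\prod_{i=d+1}^{k}(q^n-q^i)$. The sum therefore collapses to this constant multiplied by the number of $d$-dimensional subspaces of the $k$-dimensional space $W$, which is the $q$-binomial coefficient ${k\brack d}_q$. Substituting this count for the number of subspaces yields precisely the claimed identity.

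I do not anticipate any serious obstacle here: essentially all of the work has already been carried out in Theorem~\ref{th:main} and in the dimension computation of Corollary~\ref{cor:dim}. The only point requiring care is the bookkeeping that justifies the partition, namely that $U=\mathfrak{I}(T)$ ranges exactly over the $d$-dimensional subspaces of $W$ as $T$ ranges over the set being counted, together with the (immediate) fact that $\mathfrak{I}(T)$ is a function of $T$ and hence the classes are disjoint. Once this is in place, the result follows by summing a constant over the ${k\brack d}_q$ subspaces of the appropriate dimension.
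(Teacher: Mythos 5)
Your proposal is correct and follows essentially the same route as the paper: the paper likewise writes $N_q(n,k;\I)=\sum_{\dim U=d}N(V,W,U;\I)$, observes that the value from Theorem~\ref{th:main} is independent of the particular $d$-dimensional subspace $U$, and collapses the sum to ${k \brack d}_q$ times that common value. Your additional remarks justifying the partition via Corollary~\ref{cor:dim} are a correct (and slightly more explicit) account of the same bookkeeping.
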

\begin{proof}
We sum over all $d$-dimensional subspaces $U$ of $W$ to obtain
\begin{align*}
  N_q(n,k;\I)&=\sum_{\dim U=d}N(V,W,U;\I)\\
             &={k \brack d}_qN(V,W,U_0;\I),
\end{align*}
where $U_0$ is some fixed subspace of $W$ of dimension $d$. We now use Theorem~\ref{th:main} to deduce the result.
\end{proof}
The following result gives an expression for the number of linear transformations which have a given invariant subspace and will be used later on in the proof of Theorem \ref{Helmke}.
\begin{theorem}
If $U$ is a $d$-dimensional subspace of $W$, then
\label{givenU}
$$
\#\left\{T\in L(W,V): \mathfrak{I}(T)=U\right\}   = q^{d^2}\prod_{i=d+1}^{k}(q^n-q^i).
$$
\end{theorem}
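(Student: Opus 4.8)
The plan is to obtain the count by summing the refined quantity $N(V,W,U;\I)$ over all admissible invariant-factor tuples $\I$ and then recognizing the resulting sum as a count of square matrices. First I would observe that every $T\in L(W,V)$ has a well-defined tuple $\I_T$, so partitioning the set $\{T:\IT=U\}$ according to $\I_T$ gives
\begin{equation*}
\#\{T\in L(W,V):\IT=U\}=\sum_{\I}N(V,W,U;\I).
\end{equation*}
By Corollary~\ref{cor:dim}, any $T$ with $\IT=U$ satisfies $\dim U=\deg\prod_{i=1}^{k}p_i$, so only $k$-tuples $\I=(p_1,\ldots,p_k)$ with $\deg\prod_{i=1}^{k}p_i=d$ contribute to the sum; these are exactly the tuples to which Theorem~\ref{th:main} applies.

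Next I would invoke Theorem~\ref{th:main}. Since the factor $\prod_{i=d+1}^{k}(q^n-q^i)$ depends only on $d,k,n$ and not on the particular tuple $\I$, it pulls out of the sum, leaving
\begin{equation*}
\#\{T\in L(W,V):\IT=U\}=\left(\prod_{i=d+1}^{k}(q^n-q^i)\right)\sum_{\I}\nqdd,
\end{equation*}
so it remains to prove that $\sum_{\I}\nqdd=q^{d^2}$, the sum being taken over all $k$-tuples of invariant factors of total degree $d$.

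For the final step I would use the bijection already noted in the proof of Theorem~\ref{th:main} between $k$-tuples $\I$ of total degree $d$ and $d$-tuples $\J=\I_{(d)}$ of total degree $d$, obtained by discarding the leading coordinates (all equal to $1$); under this correspondence $S(\I)=S(\J)$ and $\Phi_\I(f)=\Phi_\J(f)$ for every $f$. Hence by \eqref{eq:nqnn} each summand $\nqdd$ equals $N_q(d,d;\J)$, the number of $d\times d$ matrices over $\Fq$ in the conjugacy class determined by $\J$. As $\J$ ranges over all $d$-tuples of invariant factors with $\deg\prod=d$, the corresponding conjugacy classes partition $M_d(\Fq)$, so the sum equals $|M_d(\Fq)|=q^{d^2}$. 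Substituting this back gives the claimed formula.

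The main obstacle is the recognition in the last step that the $\I$-indexed sum of normalized group orders is precisely the sum of the sizes of \emph{all} conjugacy classes in $M_d(\Fq)$, and hence equals the total number $q^{d^2}$ of such matrices. Everything else is bookkeeping: Corollary~\ref{cor:dim} controls which tuples appear, Theorem~\ref{th:main} supplies the per-class count with the $\I$-independent factor already separated off, and the padding bijection identifies the truncated tuples with genuine invariant factors of $d\times d$ matrices.
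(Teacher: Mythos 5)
Your proof is correct and follows essentially the same route as the paper: both sum the per-tuple count from Theorem~\ref{th:main} over all $k$-tuples $\I$ of total degree $d$ (the paper sums its intermediate formula \eqref{fixUandI}, which is the same quantity), pull out the $\I$-independent factor $\prod_{i=d+1}^{k}(q^n-q^i)$, and identify the remaining sum with the total count of conjugacy classes' elements in $M_d(\Fq)$, namely $q^{d^2}$. Your explicit appeal to Corollary~\ref{cor:dim} to restrict the sum to tuples of total degree $d$ is a small point the paper leaves implicit, but the argument is the same.
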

\begin{proof}
  Summing \eqref{fixUandI} over all possible $k$-tuples $\I=(p_1,\ldots,p_k)$ of invariant factors with $\deg \prod_{i=1}^{k}p_i=d$, we obtain 
  \begin{align*}
\#\left\{T\in L(W,V): \mathfrak{I}(T)=U\right\}&
=\sum_{\I}N(V,W,U;\I)\\
&=\sum_{\I}N_q(d,d;\I_{(d)}) \prod_{i=d+1}^{k}(q^n-q^i),
  \end{align*}
where $\I_{(d)}$ denotes the $d$-tuple formed by the last $d$ coordinates of $\I$. This in turn equals
  \begin{equation*}
         \prod_{i=d+1}^{k}(q^n-q^i)   \sum_{\I}N_q(d,d;\I_{(d)})=q^{d^2}\prod_{i=d+1}^{k}(q^n-q^i).
  \end{equation*} 
The last equality follows from the fact that $\I_{(d)}$ varies over invariant factors corresponding to all possible conjugacy classes in $M_d(\Fq)$ and hence the above sum counts all elements of $M_d(\Fq)$.
\end{proof}
\begin{remark}
  \label{subspace_conjugacy}
In contrast with the case of linear operators, the invariant factors do not form a complete set of similarity invariants for a linear transformation defined only on a subspace. Recall that if $T\in L(W,V)$ and $T'\in L(W',V')$ denote two linear transformations defined on subspaces $W,W'$ of $V,V'$ respectively, then $T$ and $T'$ are similar if there exists a linear isomorphism $S:V\to V'$ with $S(W)=W'$ such that the following diagram commutes: 
\[
\begin{tikzcd}
W \arrow{r}{T} \arrow[swap]{d}{S} & V \arrow{d}{S}[swap]{\simeq} \\
W' \arrow{r}{T'} & V' 
\end{tikzcd}
\]
If $T$ and $T'$ are similar, then they necessarily have the same invariant factors but the converse is not true in general. We refer to Gohberg et al. \cite[Sec. III.3]{Gohbergetal1995} or Ferrer and Puerta \cite{JfFp1992} for a characterization of similarity invariants in this case.
\end{remark}

\section{Reachability subspace of a matrix pair}
\label{pairs}
In this section we assume that $k,n$ are positive integers with $k<n$. Let $\vx_0$ denote a fixed column vector in $\Fq^k$ and consider the sequence $\vx_s$ in $\Fq^k$ defined by
\begin{equation*}
  {\bf x}_{s+1}=A{\bf x}_s+B{\bf u}_s  \quad (s \geq 0),
\end{equation*}
where $A\in M_k(\Fq)$, $B\in M_{k,n-k}(\Fq)$ are fixed and $\vu_s(s\geq 0)$ is some sequence in $\Fq^{n-k}$. This sequence arises frequently in diverse areas such as control systems, linear sequential machines, discrete automata theory and convolutional error correcting codes just to name a few (see \cite{Forney1970}, \cite{Harrison1969}, \cite{Sontag1998}). The recurrence is readily solved for $\vx_k$ by backward substitution and we obtain
\begin{equation*}
  \vx_k=A^k\vx_0+
\begin{bmatrix}
B &AB &\cdots & A^{k-1}B
  \end{bmatrix}
\begin{bmatrix}
\vu_{k-1} \\
\vu_{k-2}\\
\vdots \\
\vu_0
  \end{bmatrix}.
\end{equation*}
It is often required that $\vx_k$ take all possible values in $\Fq^k$ by suitably varying the `input sequence' $\vu_s(0\leq s \leq k-1)$. This happens if and only if the \emph{reachability matrix} defined by
$$\mathcal{C}(A,B):=\begin{bmatrix}
B &AB &\cdots & A^{k-1}B
  \end{bmatrix}
$$
has full rank $k$. This motivates the definition of a reachable pair of matrices.
\begin{definition}
\label{reachable}
  The pair $(A,B)\in M_k(\Fq)\times M_{k,n-k}(\Fq)$ is said to reachable if
$$
\rank \mathcal{C}(A,B)=k.
$$
\end{definition}
The problem of counting the number of reachable pairs $(A,B)$ over a finite field is equivalent to counting the number of zero kernel pairs and is an interesting problem in its own right with connections to other combinatorial objects. We refer to \cite{zerokernel} for more on this topic. 
When the pair $(A,B)$ is not reachable, $\vx_k$ takes values in an affine subspace of dimension $r\leq k$ determined by $\vx_0$ and the reachability matrix $\mathcal{C}(A,B)$. We have the following natural generalization of the notion of reachability. 
\begin{definition}
\label{matrixpair}
 The pair $(A,B)\in M_k(\Fq)\times M_{k,n-k}(\Fq)$ is said to have an $r$-dimensional reachability subspace if
$$
\rank \mathcal{C}(A,B)=r.
$$
\end{definition}
A natural question at this point is to ask whether there is a simple formula for the number of matrix pairs with reachability subspace of a given dimension. This is the question addressed by Theorem \ref{Helmke}. We first establish the connection between linear transformations defined on a subspace and matrix pairs with reachability subspace of a given dimension.
\begin{proposition}
 Suppose $T\in L(W,V)$ and let the matrix of $T$ with respect to the bases $\Bk$ and $\Bn$ be ${A \brack C}\in M_{n,k}(\Fq)$ for some $A\in M_k(\Fq)$ and $C\in M_{n-k,k}(\Fq)$. We have an isomorphism of $\Fq$-vector spaces
$$
\mathfrak{I}(T)\simeq\bigcap_{i=0}^{k-1}\ker(CA^i).
$$
\end{proposition}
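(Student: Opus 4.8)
The plan is to translate everything into coordinates via the given bases and then identify the maximal $T$-invariant subspace of $W$ with a concrete $A$-invariant subspace of $\ker C$. Using $\Bk$ to identify $W$ with $\Fq^k$ and $\Bn$ to identify $V$ with $\Fq^n$, a vector $w\in W$ with coordinate vector $x\in\Fq^k$ is sent by $T$ to the vector with coordinates ${Ax \brack Cx}\in\Fq^n$. Since $W$ is spanned by the first $k$ basis vectors, a vector of $V$ lies in $W$ precisely when its last $n-k$ coordinates vanish. Hence $T(w)\in W$ if and only if $Cx={\bf 0}$, and in that case the coordinate vector of $T(w)$ inside $W$ is simply $Ax$.

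First I would characterize the $T$-invariant subspaces of $W$. A subspace $U\subseteq W$ with coordinate subspace $S\subseteq\Fq^k$ is $T$-invariant exactly when $T(U)\subseteq U\subseteq W$, which by the previous observation is equivalent to the two conditions $S\subseteq\ker C$ and $AS\subseteq S$. Thus $\mathfrak{I}(T)$ corresponds, under the coordinate isomorphism $W\simeq\Fq^k$, to the largest $A$-invariant subspace of $\Fq^k$ contained in $\ker C$.

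The heart of the proof is to show that this largest $A$-invariant subspace contained in $\ker C$ is exactly $K:=\bigcap_{i=0}^{k-1}\ker(CA^i)$. That $K\subseteq\ker C$ is immediate on taking $i=0$, and that every $A$-invariant $S\subseteq\ker C$ lies in $K$ follows because $A^i x\in S\subseteq\ker C$ forces $CA^i x={\bf 0}$ for all $i\geq 0$. The one point requiring care is that $K$ itself is $A$-invariant: for $x\in K$ one has $CA^{i+1}x={\bf 0}$ for $0\leq i\leq k-2$ directly, while for $i=k-1$ one must verify $CA^k x={\bf 0}$. This is where the Cayley--Hamilton theorem enters: since $A$ is $k\times k$, the matrix $A^k$ is an $\Fq$-linear combination of $I,A,\ldots,A^{k-1}$, so $CA^k x$ is the corresponding combination of the vanishing vectors $CA^i x$ and hence is zero. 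Truncating the intersection at $i=k-1$ therefore loses nothing, and this step is the main (though mild) obstacle.

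Finally I would assemble these pieces: $K$ is $A$-invariant and contained in $\ker C$, and it contains every such subspace, so $K$ is the maximal $A$-invariant subspace of $\ker C$. Transporting back through the coordinate isomorphism $W\simeq\Fq^k$, the subspace $\mathfrak{I}(T)$ is carried onto $K=\bigcap_{i=0}^{k-1}\ker(CA^i)$, which yields the claimed isomorphism of $\Fq$-vector spaces.
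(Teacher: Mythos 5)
Your proof is correct and follows essentially the same route as the paper's: both pass to coordinates, obtain the inclusion of $\mathfrak{I}(T)$ into $\bigcap_{i=0}^{k-1}\ker(CA^i)$ by direct computation, and use the Cayley--Hamilton theorem for the reverse inclusion. The only cosmetic difference is that you show the intersection is itself $A$-invariant and maximal among $A$-invariant subspaces of $\ker C$, whereas the paper argues vector-by-vector that each element of the intersection generates a $T$-invariant cyclic subspace.
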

\begin{proof}
The proof is similar to that of \cite[Prop. 3.2]{zerokernel}. For any $v\in W$, let $\vx_v\in \Fq^k$ denote the coordinate column vector of $v$ with respect to the basis $\Bk$. If $v\in \IT$, then $T^iv$ lies in $\IT$ and hence in $W$ for all $i$. The coordinate vector of $Tv$ with respect to $\Bn$ is
$$
{A \brack C}\vx_v={A\vx_v \brack C\vx_v}.
$$
Since $Tv\in W$, it follows that $C\vx_v=0$ and the coordinate vector of $Tv$ with respect to $\Bk$ is simply $A\vx_v$. By considering $T^2v$ it follows similarly that $CA\vx_v=0$ and hence the coordinate vector of $T^2v$ with respect to $\Bk$ is $A^2\vx_v$. Continuing this line of reasoning, we find that $CA^i\vx_v=0$ for $0\leq i \leq k-1$. Thus
$$
\vx_v\in \bigcap_{i=0}^{k-1}\ker(CA^i),
$$
which proves that
\begin{equation}
\label{incl}
\{\vx_v:v\in\mathfrak{I}(T)\}\subseteq\bigcap_{i=0}^{k-1}\ker(CA^i).  
\end{equation}
Conversely, suppose ${\bf x}\in \bigcap_{i=0}^{k-1}\ker(CA^i)$. 
Let $\chi_A(x)$ denote the characteristic polynomial of $A$. If $u$ is the vector whose coordinates with respect to $\Bk$ are precisely ${\bf x}$, then it is easily verified that $\chi_A(T)u$ is well-defined and equal to 0. It follows that the $\Fq$-linear subspace spanned by $u, Tu, \ldots, T^{k-1}u$ is invariant under $T$. Therefore $u$ generates a $T$-invariant subspace and hence $u\in \IT$. Thus the reverse inclusion in \eqref{incl} holds as well. We thus have
$$
\{\vx_v:v\in\mathfrak{I}(T)\}=\bigcap_{i=0}^{k-1}\ker(CA^i),
$$
from which the proposition immediately follows.
\end{proof}
\begin{corollary}
\label{dimit}
Suppose ${A \brack C}\in M_{n,k}(\Fq)$ denotes the matrix of $T\in L(W,V)$. We have 
\begin{align*}
\dim \IT&=\dim \bigcap_{i=0}^{k-1}\ker(CA^i)  \\
&=k-\rank\begin{bmatrix}
      C \\
      CA \\
      \vdots \\
      CA^{k-1}
    \end{bmatrix}.
\end{align*}
\end{corollary}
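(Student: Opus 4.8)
The plan is to prove Corollary~\ref{dimit} by combining the isomorphism of the immediately preceding proposition with a standard linear-algebra identity relating the dimension of an intersection of kernels to the rank of a stacked matrix. The first equality is essentially immediate: the preceding proposition gives an isomorphism $\mathfrak{I}(T)\simeq\bigcap_{i=0}^{k-1}\ker(CA^i)$ of $\Fq$-vector spaces, so taking dimensions yields $\dim\IT=\dim\bigcap_{i=0}^{k-1}\ker(CA^i)$ at once.

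For the second equality, I would observe that a vector $\vx\in\Fq^k$ lies in $\bigcap_{i=0}^{k-1}\ker(CA^i)$ precisely when $CA^i\vx=0$ for every $i$ with $0\leq i\leq k-1$, which is exactly the condition that $\vx$ lies in the kernel of the stacked matrix
$$
M:=\begin{bmatrix}
      C \\
      CA \\
      \vdots \\
      CA^{k-1}
    \end{bmatrix}\in M_{k(n-k),k}(\Fq).
$$
Thus $\bigcap_{i=0}^{k-1}\ker(CA^i)=\ker(M)$ as subspaces of $\Fq^k$. Applying the rank-nullity theorem to the linear map $M:\Fq^k\to\Fq^{k(n-k)}$ gives $\dim\ker(M)=k-\rank(M)$, which is the desired second equality. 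Chaining the two identities completes the proof.

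I do not anticipate a genuine obstacle here, since the statement is a direct corollary of the preceding proposition together with rank-nullity; the only point requiring a word of care is the bookkeeping that the intersection of the kernels $\ker(CA^i)$ is literally the kernel of the vertically stacked matrix, so that the full rank of that stacked matrix is what measures the codimension of the intersection inside $\Fq^k$. Once this identification is made explicit, the rest is a one-line application of rank-nullity, and no further computation is needed.
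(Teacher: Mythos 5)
Your proof is correct and follows the same route as the paper: the first equality by taking dimensions in the isomorphism of the preceding proposition, and the second by identifying the intersection of kernels with the kernel of the stacked matrix and applying rank-nullity. No issues.
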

\begin{proof}
  The second equality follows from the rank-nullity theorem since the null space of the block matrix above is precisely $\cap_{i=0}^{k-1}\ker(CA^i)$.
\end{proof}

\begin{corollary}
\label{link}
The pair $(A,B)\in M_k(\Fq)\times M_{k,n-k}(\Fq)$ has a reachability subspace of dimension $k-\dim \IT$ where $T:W\to V$ is the linear transformation defined by declaring the matrix of $T$ with respect to $\Bk$ and $\Bn$ to be ${A^T \brack B^T}$.
\end{corollary}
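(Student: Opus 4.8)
The plan is to reduce everything to Corollary~\ref{dimit}, which already expresses $\dim\IT$ as a rank deficiency, and then to recognize the rank appearing there as the rank of the reachability matrix $\mathcal{C}(A,B)$. First I would set up the block decomposition dictated by the hypothesis: the matrix of $T$ with respect to $\Bk$ and $\Bn$ is prescribed to be ${A^T \brack B^T}$, so in the notation of Corollary~\ref{dimit} the top $k\times k$ block is $A^T$ and the bottom $(n-k)\times k$ block is $B^T$. Applying that corollary verbatim gives
\begin{equation*}
\dim\IT = k-\rank\begin{bmatrix} B^T \\ B^T A^T \\ \vdots \\ B^T(A^T)^{k-1}\end{bmatrix}.
\end{equation*}

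Next I would simplify each block using the elementary identity $B^T(A^T)^i = (A^iB)^T$, valid because transposition reverses products. This shows that the stacked matrix above is precisely the transpose of
\begin{equation*}
\mathcal{C}(A,B)=\begin{bmatrix} B & AB & \cdots & A^{k-1}B\end{bmatrix},
\end{equation*}
since stacking the transposed blocks vertically is exactly the transpose of placing the blocks side by side. Because rank is invariant under transposition, the rank appearing in Corollary~\ref{dimit} equals $\rank\mathcal{C}(A,B)$, which is by definition the dimension $r$ of the reachability subspace of $(A,B)$ (Definition~\ref{matrixpair}). Substituting yields $\dim\IT = k-r$, equivalently $r = k-\dim\IT$, which is the claim.

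The argument is essentially bookkeeping, and the only point requiring care is the matching of blocks across the transpose: one must check that the vertical stack of the $(A^iB)^T$ is genuinely the transpose of the horizontal concatenation $\mathcal{C}(A,B)$, and that the index range $i=0,\dots,k-1$ lines up on both sides. I do not expect a real obstacle here; the substance of the result already resides in Corollary~\ref{dimit} (and ultimately in the preceding proposition identifying $\IT$ with $\bigcap_{i}\ker(CA^i)$), so this corollary is merely the transpose reformulation that translates the subspace picture into the control-theoretic language of the reachability matrix.
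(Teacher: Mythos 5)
Your proof is correct and follows exactly the route the paper intends: the paper's own proof is the one-line ``Follows from Definition \ref{matrixpair} and Corollary \ref{dimit},'' and your argument simply fills in the transpose bookkeeping ($B^T(A^T)^i=(A^iB)^T$, rank invariance under transposition) that makes that deduction explicit.
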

\begin{proof}
  Follows from Definition \ref{matrixpair} and Corollary \ref{dimit}.
\end{proof}
We are now ready to prove the result of Helmke et al. \cite[Thm. 2]{Helmkeetal2015} on matrix pairs with reachability subspace of a given dimension.
\begin{theorem}
\label{Helmke}
The number of pairs $(A,B)\in M_k(\Fq)\times M_{k,n-k}(\Fq)$ with $r$-dimensional reachability subspace is given by
$$
{k \brack r}_q q^{(k-r)^2}\prod_{i=k-r+1}^{k}(q^n-q^i).
$$
\end{theorem}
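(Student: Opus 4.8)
The plan is to transport the counting problem for matrix pairs over to the counting problem for linear transformations defined on a subspace, using the transpose correspondence established in Corollary~\ref{link}, and then to read off the answer from Theorem~\ref{givenU}.

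First I would note that the assignment sending a pair $(A,B)\in M_k(\Fq)\times M_{k,n-k}(\Fq)$ to the transformation $T\in L(W,V)$ whose matrix with respect to $\Bk$ and $\Bn$ is ${A^T \brack B^T}$ is a bijection of sets. Indeed, every $T\in L(W,V)$ has a unique matrix ${A' \brack C'}$ with $A'\in M_k(\Fq)$ and $C'\in M_{n-k,k}(\Fq)$, and transposition is an involution identifying the block $M_k(\Fq)\times M_{k,n-k}(\Fq)$ with $M_k(\Fq)\times M_{n-k,k}(\Fq)$. By Corollary~\ref{link}, this bijection carries exactly those pairs with an $r$-dimensional reachability subspace to those transformations $T$ with $\dim\IT=k-r$. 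Hence it suffices to count $T\in L(W,V)$ satisfying $\dim\IT=d$, where I set $d:=k-r$.

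To count these, I would partition according to the actual maximal invariant subspace, letting $U$ range over the $d$-dimensional subspaces of $W$. Theorem~\ref{givenU} shows that $\#\{T\in L(W,V):\IT=U\}=q^{d^2}\prod_{i=d+1}^{k}(q^n-q^i)$, a value independent of the particular choice of $U$, so summing over the ${k \brack d}_q$ subspaces of dimension $d$ gives
$$
\#\{T\in L(W,V):\dim\IT=d\}={k \brack d}_q\, q^{d^2}\prod_{i=d+1}^{k}(q^n-q^i).
$$

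Finally I would substitute $d=k-r$ and invoke the symmetry ${k \brack k-r}_q={k \brack r}_q$ of the $q$-binomial coefficient to recover the claimed formula. I do not expect a genuine obstacle: once the transpose correspondence and Theorem~\ref{givenU} are available, the argument is a direct translation followed by a reindexing. The only points demanding care are confirming that the correspondence is a bijection onto all of $L(W,V)$, so that no pairs are lost or double counted, and checking that the bookkeeping $r\leftrightarrow k-r$ is consistent across the $q$-binomial factor, the power of $q$, and the product.
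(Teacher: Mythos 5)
Your argument is correct and follows the paper's own proof essentially verbatim: both translate the count via the transpose correspondence of Corollary~\ref{link} to counting $T\in L(W,V)$ with $\dim\IT=k-r$, sum the constant value from Theorem~\ref{givenU} over the ${k\brack k-r}_q$ choices of the $d$-dimensional subspace, and conclude by the symmetry ${k\brack k-r}_q={k\brack r}_q$. No differences worth noting.
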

\begin{proof}
  The number of pairs $(A,B)$ with $r$-dimensional reachability subspace is equal to 
$$
\#\left\{(A,B)\in M_k(\Fq)\times M_{k,n-k}(\Fq):\rank \mathcal{C}(A,B)=r\right\}.\\
$$
By Corollary \ref{link}, this equals
$$
\#\left\{T\in L(W,V): \dim \mathfrak{I}(T)=d\right\},
$$
where $d=k-r$. As $\mathfrak{I}(T)$ could be any $d$-dimensional subspace of $W$, the number of possibilities for $\mathfrak{I}(T)$ is clearly ${k \brack d}_q={k \brack r}_q$. The result now follows from Theorem~\ref{givenU}.
\end{proof}

Setting $r=k$ in the above theorem, we find that the number of reachable pairs $(A,B)\in M_k(\Fq)\times M_{k,n-k}(\Fq)$ is given by
$$
\prod_{i=1}^k(q^n-q^i).
$$

\begin{remark}
Let $\I_1$ denote the $k$-tuple $(1, \ldots ,1)$. The formula for the number of reachable pairs is equivalent to
$$
N_q(n,k;\I_1)=\prod_{i=1}^{k}(q^n-q^i).
$$
\end{remark}


\section{An extension of the Gerstenhaber-Reiner formula}
\label{sec:ghformula}
It is a remarkable fact that the number of nilpotent matrices in $M_n(\Fq)$ is given by a rather simple formula, namely, $q^{n(n-1)}$. This result is often attributed to Fine and Herstein \cite{FH1958} but appears to have been known earlier to Philip Hall who gave two different proofs in a series of lectures at the Edinburgh mathematical colloquium at St. Andrews in 1955 (see \cite[p. 618]{PhilipHall1984}). Subsequently, the result was generalized independently by Gerstenhaber \cite{Mg1961} and Reiner \cite[Thm. 2]{Ir1961} who gave a formula for the number of matrices with a given characteristic polynomial. In this section, we extend the result of Gerstenhaber and Reiner to the setting of rectangular matrices.

Let $\Phi_n:M_n(\Fq)\to \Fq[x]$ denote the map which associates to a square $n\times n$ matrix its characteristic polynomial, i.e., $\Phi_n(A)=\det(xI_n-A)$. The fibers of $\Phi_n$ can be computed explicitly as the following theorem shows.
\begin{theorem}[Gerstenhaber-Reiner]
For a monic polynomial $f(x)\in \Fq[x]$ of degree $n$, let $f=f_1^{e_1}\cdots f_t^{e_t}$ denote the canonical factorization of $f$ into distinct irreducible polynomials $f_i$ of degree $d_i(1\leq i \leq t)$ in $\Fq[x]$. The number of matrices in $M_n(\Fq)$ with characteristic polynomial $f$ is given by
$$
|\Phi_n^{-1}(f)|=q^{n^2-n}\frac{F(q,n)}{\prod_{i=1}^{t}F(q^{d_i},e_i)},
$$
where $F(q,r)=\prod_{i=1}^{r}(1-q^{-i})$ for each nonnegative integer $r$.
\end{theorem}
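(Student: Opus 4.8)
The plan is to reduce the fiber count to the known size of a single conjugacy class given by \eqref{eq:nqnn}, to exploit the fact that this formula is multiplicative across the distinct irreducible factors of $f$, and to finish with the Fine--Herstein count of nilpotent matrices over each residue field $\mathbb{F}_{q^{d_i}}$. First I would observe that the characteristic polynomial of a matrix equals the product of its invariant factors, so the fiber $\Phi_n^{-1}(f)$ is the disjoint union of those conjugacy classes whose invariant-factor tuple $\I=(p_1,\ldots,p_n)$ satisfies $\prod_{j=1}^{n}p_j=f$. Summing \eqref{eq:nqnn} over these tuples gives
\[
|\Phi_n^{-1}(f)|=\sum_{\I:\,\prod_j p_j=f}\frac{|\GL_n(\Fq)|}{\prod_{g\in S(\I)}c_g(\Phi_\I(g))}.
\]

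The key structural step is that the constraint $\prod_j p_j=f=f_1^{e_1}\cdots f_t^{e_t}$ decouples across the primes $f_i$. Writing each $p_j=\prod_i f_i^{a_{ij}}$, the conditions $p_j\mid p_{j+1}$ together with $\prod_j p_j=f$ say precisely that, for each fixed $i$, the sequence $(a_{i1}\le a_{i2}\le\cdots\le a_{in})$ reverses to a partition $\mu^{(i)}\vdash e_i$, namely $\Phi_\I(f_i)$, and that these partitions may be chosen independently over $i$. Since $c_{f_i}(\Phi_\I(f_i))$ depends only on $\mu^{(i)}$, the sum factors into a product over $i$:
\[
|\Phi_n^{-1}(f)|=|\GL_n(\Fq)|\prod_{i=1}^{t}\Bigl(\sum_{\mu\vdash e_i}\frac{1}{c_{f_i}(\mu)}\Bigr).
\]

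What remains is the per-prime identity
\[
\sum_{\mu\vdash e}\frac{1}{c_g(\mu)}=\frac{1}{Q^{e}\,F(Q,e)},\qquad Q=q^{\deg g},
\]
for an irreducible $g$. Here $c_g(\mu)$ is, by its definition, the order of the centralizer in $\GL_e(\mathbb{F}_Q)$ of a nilpotent matrix of Jordan type $\mu$, so $|\GL_e(\mathbb{F}_Q)|/c_g(\mu)$ is the size of that nilpotent class; hence the left-hand side, multiplied by $|\GL_e(\mathbb{F}_Q)|$, counts all nilpotent $e\times e$ matrices over $\mathbb{F}_Q$. By the Fine--Herstein/Hall count this total is $Q^{e(e-1)}$, and combining it with the routine identity $|\GL_e(\mathbb{F}_Q)|=Q^{e^2}F(Q,e)$ yields the claim. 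Substituting back, using $|\GL_n(\Fq)|=q^{n^2}F(q,n)$ and $\sum_i d_i e_i=n$, collapses the product to $q^{\,n^2-n}\,F(q,n)\big/\prod_{i}F(q^{d_i},e_i)$, which is the asserted formula.

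I expect the main obstacle to be the decoupling step: one must verify carefully that summing the multiplicative formula \eqref{eq:nqnn} over all invariant-factor tuples with product $f$ coincides with independently summing over a partition at each prime, a fact that ultimately rests on the primary decomposition of the $\Fq[x]$-module attached to the matrix. Once this is in place, the per-prime identity is a clean repackaging of the nilpotent matrix count over the extension field $\mathbb{F}_{q^{d_i}}$, and the final simplification is purely arithmetic.
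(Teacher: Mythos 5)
The paper does not actually prove this theorem; it quotes it from Gerstenhaber \cite{Mg1961} and Reiner \cite{Ir1961}, so there is no internal proof to compare against. Your argument is nonetheless correct and essentially self-contained modulo two standard inputs: Hall's conjugacy-class formula \eqref{eq:nqnn} (which the paper does state) and the Fine--Herstein count $Q^{e(e-1)}$ of nilpotent $e\times e$ matrices over $\F_Q$. The decoupling step is sound: a tuple $(p_1,\ldots,p_n)$ with $p_j\mid p_{j+1}$ and $\prod_j p_j=f$ is exactly the data of one partition $\mu^{(i)}=\Phi_\I(f_i)\vdash e_i$ for each $i$ (each such partition has at most $e_i\le n$ parts, so the reconstruction of the $p_j$ never runs out of slots), the divisibility chain imposes no coupling between distinct primes, and $c_{f_i}$ depends only on $\mu^{(i)}$ and $d_i$, so the sum of $1/\prod_i c_{f_i}(\mu^{(i)})$ factors as you claim. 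The per-prime identity
\[
\sum_{\mu\vdash e}\frac{1}{c_g(\mu)}=\frac{1}{Q^{e}F(Q,e)},\qquad Q=q^{\deg g},
\]
is also correct, because $c_g(\mu)$ as defined in the paper coincides with $c_x(\mu)$ computed over $\F_Q$, i.e.\ with the centralizer order of a nilpotent matrix of Jordan type $\mu$ in $\GL_e(\F_Q)$; applying \eqref{eq:nqnn} over $\F_Q$ and summing class sizes over $\mu\vdash e$ then gives the nilpotent count, and $|\GL_e(\F_Q)|=Q^{e^2}F(Q,e)$ finishes it. One caveat you should make explicit: the Fine--Herstein count is itself the special case $f=x^n$ of the theorem being proved, so your argument is a reduction of the general case to the nilpotent case and is non-circular only because that special case has independent proofs (the paper cites \cite{FH1958} and Hall's lectures). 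The closing arithmetic using $|\GL_n(\Fq)|=q^{n^2}F(q,n)$ and $\sum_i d_ie_i=n$ checks out, so your proof is a valid derivation that the paper itself omits.
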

We extend this result to rectangular matrices as follows. Define 
$$\Phi_{n,k}:M_{n,k}(\Fq)\to \Fq[x]$$
by
\begin{equation*}
  \Phi_{n,k}(A)=\delta_k\left(xI_{n,k}-A\right),
\end{equation*}
where $\delta_k(\cdot)$ denotes the $k$\textsuperscript{th} determinantal divisor as before. It is clear that $\Phi_{n,n}$ is just the map $\Phi_n$. The following result gives a formula for the size of the fibers of $\Phi_{n,k}$ and may be viewed as an extension of the Gerstenhaber-Reiner formula. 
 \begin{theorem}
\label{extension}
  For a monic polynomial $f(x)\in \Fq[x]$ of degree $d\leq k$, let $f=f_1^{e_1}\cdots f_t^{e_t}$ be the canonical factorization of $f$ into distinct irreducibles $f_i$ of degree $d_i(1\leq i \leq t)$. We have 
  \begin{equation*}
\label{eq:extension}
   \left| \Phi^{-1}_{n,k}(f)\right|= {k \brack d}_q\frac{q^{d^2-d}F(q,d)}{\prod_{i=1}^{t}F(q^{d_i},e_i)}\prod_{i=d+1}^{k}(q^n-q^i).
  \end{equation*}
 \end{theorem}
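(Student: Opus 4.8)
The plan is to interpret $\left|\Phi^{-1}_{n,k}(f)\right|$ as a sum of the quantities $N_q(n,k;\I)$ computed in Theorem~\ref{snfcount}, and then to collapse the resulting sum into the classical Gerstenhaber-Reiner count for square matrices of size $d$.

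First I would observe that, by definition, $\Phi_{n,k}(A)=\delta_k(xI_{n,k}-A)=\prod_{i=1}^{k}p_i$, where $p_1,\ldots,p_k$ are the invariant factors of $xI_{n,k}-A$. Hence $\Phi_{n,k}(A)=f$ holds precisely when the tuple $\I_A=(p_1,\ldots,p_k)$ satisfies $\prod_{i=1}^{k}p_i=f$. Partitioning the fiber according to the tuple of invariant factors gives
\begin{equation*}
\left|\Phi^{-1}_{n,k}(f)\right|=\sum_{\I}N_q(n,k;\I),
\end{equation*}
where the sum ranges over all $k$-tuples $\I$ of invariant factors with $\prod_{i=1}^{k}p_i=f$, each of which has $\deg\prod_{i=1}^k p_i=d$.

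Next I would substitute the formula of Theorem~\ref{snfcount} into every summand. Since $d=\deg f$ is common to all such $\I$, the factors ${k \brack d}_q$ and $\prod_{i=d+1}^{k}(q^n-q^i)$ are independent of $\I$ and pull out of the sum, leaving
\begin{equation*}
\left|\Phi^{-1}_{n,k}(f)\right|={k \brack d}_q\left(\prod_{i=d+1}^{k}(q^n-q^i)\right)\sum_{\I}\nqdd.
\end{equation*}
The crux is to identify the remaining sum. As in the proof of Theorem~\ref{th:main}, each $k$-tuple $\I$ with $\prod p_i=f$ has $p_i=1$ for $i\le k-d$, and satisfies $S(\I)=S(\I_{(d)})$ together with $\Phi_\I(g)=\Phi_{\I_{(d)}}(g)$ for all $g\in S(\I)$, where $\I_{(d)}$ denotes the $d$-tuple of last coordinates. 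Thus $\I\mapsto\I_{(d)}$ is a bijection onto the set of $d$-tuples of invariant factors with product $f$, equivalently onto the conjugacy classes in $M_d(\Fq)$ with characteristic polynomial $f$. By \eqref{eq:nqnn} each summand equals $N_q(d,d;\I_{(d)})$, so the sum becomes $\sum_{\I_{(d)}}N_q(d,d;\I_{(d)})=\left|\Phi_d^{-1}(f)\right|$, the number of $d\times d$ matrices with characteristic polynomial $f$.

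Finally I would invoke the Gerstenhaber-Reiner formula to replace $\left|\Phi_d^{-1}(f)\right|$ by $q^{d^2-d}F(q,d)/\prod_{i=1}^{t}F(q^{d_i},e_i)$; multiplying by the prefactor ${k \brack d}_q\prod_{i=d+1}^{k}(q^n-q^i)$ yields the asserted expression. The only step demanding care is the bookkeeping in passing from $k$-tuples to $d$-tuples---checking that the correspondence $\I\mapsto\I_{(d)}$ is a bijection and that the $c_g$ factors are genuinely unaltered---but this is exactly what was already recorded in the proof of Theorem~\ref{th:main}, so no new difficulty arises.
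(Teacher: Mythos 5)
Your proposal is correct and follows essentially the same route as the paper: decompose the fiber over $f$ into the sets counted by $N_q(n,k;\I)$, apply Theorem~\ref{snfcount}, identify the remaining sum (via $\I\mapsto\I_{(d)}$ and \eqref{eq:nqnn}) with $\left|\Phi_d^{-1}(f)\right|$, and finish with the Gerstenhaber-Reiner formula. No substantive differences.
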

 \begin{proof}
We have $\Phi_{n,k}(A)=f$ precisely when the product of the diagonal elements appearing in the Smith form of $xI_{n,k}-A$ is $f$. Therefore, if we sum over all possible $k$-tuples $(p_1,\ldots,p_k)$ of monic polynomials in $\Fq[x]$ with $p_i\mid p_{i+1}(1\leq i \leq k-1)$ and $p_1\cdots p_k=f$, we obtain
   \begin{align*}
     \left| \Phi^{-1}_{n,k}(f)\right|&=\sum_{\substack{p_1\cdots p_k=f\\ p_i \mid p_{i+1}}}N_q\left(n,k;(p_1,\ldots,p_k)\right)\\
                &={k \brack d}_q \prod_{i=d+1}^{k}(q^n-q^i)\sum_{\substack{p_1\cdots p_k=f\\ p_i \mid p_{i+1}}}N_q(d,d;(p_{k-d+1},\ldots, p_k)),
   \end{align*}
  where the second equality follows from Theorem \ref{snfcount} and \eqref{eq:nqnn}. The sum in the last expression counts all $d\times d$ matrices for which the product of invariant factors is $f$ and, therefore, is clearly equal to the number of matrices in $M_d(\Fq)$ with characteristic polynomial $f$. Thus
\begin{align*}
   \left| \Phi^{-1}_{n,k}(f)\right|&={k \brack d}_q\left|\Phi_d^{-1}(f)\right| \prod_{i=d+1}^{k}(q^n-q^i),
\end{align*}
and the theorem follows by applying the Gerstenhaber-Reiner formula to $|\Phi_d^{-1}(f)|$. 
\end{proof}

We conclude this section with an interesting application of Theorem \ref{extension}. Consider the following combinatorial problem: Given an $n$-dimensional vector space $V$ over $\Fq$ and a $k$-dimensional subspace $W$ of $V$, how many linear transformations $T:W\to V$ have the property that $T$ can be extended to a nilpotent transformation on all of $V$? We shall show that the answer is $q^{n(k-1)}(q^n-q^k+1)$. Note that this extends Hall's result 
which says that the answer  for $k=n$ must be $q^{n(n-1)}$. Before we proceed, we need a lemma.

  \begin{lemma}
\label{identity}
For any nonnegative integer $d$ and indeterminate $y$,
$$
y^d=\sum_{j=0}^{d}q^{j^2}\prod_{i=j+1}^{d}(y-q^i){d \brack j}_q.
$$
\end{lemma}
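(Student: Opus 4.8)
The plan is to prove this $q$-identity not by direct algebraic manipulation but by leveraging the enumerative results just established, in particular Theorem~\ref{givenU}. The guiding observation is that both sides are polynomials in $y$ of degree $d$, so it suffices to verify the identity at infinitely many values of $y$; the natural values are $y=q^n$ for integers $n\geq d$, at which the right-hand side acquires a combinatorial meaning.

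First I would reinterpret the claim by a double count. Fix a nonnegative integer $d$ and regard it as the dimension of the source space: take $\dim W=d$ and $\dim V=n$ with $n\geq d$, so that $L(W,V)$ consists of all $n\times d$ matrices over $\Fq$ and hence $|L(W,V)|=q^{nd}$. On the other hand, every $T\in L(W,V)$ has a well-defined maximal invariant subspace $\mathfrak{I}(T)\subseteq W$, and its dimension $j$ ranges over $0,1,\ldots,d$. Partitioning $L(W,V)$ according to $j=\dim\mathfrak{I}(T)$, and using Theorem~\ref{givenU} to count the $T$ with a \emph{fixed} $j$-dimensional invariant subspace together with the Gaussian coefficient ${d \brack j}_q$ counting the admissible subspaces, gives
$$
q^{nd}=\sum_{j=0}^{d}{d \brack j}_q\,q^{j^2}\prod_{i=j+1}^{d}(q^n-q^i),
$$
which is exactly the asserted identity with $y$ specialized to $q^n$.

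Next I would promote this to an identity of polynomials. Both sides of the stated identity are polynomials in $y$ of degree at most $d$: the left side is $y^d$, while on the right the $j$-th summand contributes the factor $\prod_{i=j+1}^{d}(y-q^i)$ of degree $d-j$. Since the double count above shows the two polynomials agree at the infinitely many distinct points $y=q^{d},q^{d+1},q^{d+2},\ldots$ (distinct because $q\geq 2$), they must coincide as polynomials in $y$, and the lemma follows.

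The only point demanding care is the correct treatment of the boundary indices of the sum. One should check that the $j=0$ term of Theorem~\ref{givenU} reads $\prod_{i=1}^{d}(q^n-q^i)$ — the number of $T$ with trivial invariant subspace — and that the $j=d$ term reads $q^{d^2}$, the number of $T$ for which $W$ itself is invariant, i.e.\ $\mathfrak{I}(T)=W$; this is where the conventions $N_q(0,0;\I_{(0)})=1$ and $|\GL_0(\Fq)|=1$ noted after Theorem~\ref{th:main} are used. Beyond this bookkeeping there is no genuine obstacle: the substance is carried entirely by Theorem~\ref{givenU}, and the passage from infinitely many specializations to a polynomial identity is routine. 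As an alternative one could instead prove the lemma self-contained by induction on $d$ or via the $q$-binomial theorem, but the double-counting argument is the most transparent given the machinery now in hand.
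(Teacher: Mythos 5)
Your proof is correct, but it takes a genuinely different route from the paper's. The paper proves Lemma~\ref{identity} by pure $q$-series manipulation: it starts from the standard expansion $z^d=\sum_{j=0}^{d}{d \brack j}_q\prod_{l=0}^{j-1}(z-q^l)$, applies the symmetry ${d\brack j}_q=q^{j(d-j)}{d\brack j}_{1/q}$, replaces $q$ by $1/q$, and rescales $z$ to arrive at the stated form. You instead derive the identity combinatorially: fixing $\dim W=d$, $\dim V=n\ge d$, you partition the $q^{nd}$ maps in $L(W,V)$ by $\dim\mathfrak{I}(T)=j$, apply Theorem~\ref{givenU} (with $k=d$) together with the ${d\brack j}_q$ choices of the invariant subspace, and then pass from the specializations $y=q^n$, $n\ge d$, to a polynomial identity in $y$ by comparing degrees. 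This is legitimate and non-circular --- Theorem~\ref{givenU} is proved independently of the lemma, and the sets $\{T:\mathfrak{I}(T)=U\}$ are indeed disjoint as $U$ varies, so the double count is sound; your boundary checks at $j=0$ and $j=d$ are also right. What each approach buys: yours exposes the lemma as the ``inverse'' of Theorem~\ref{givenU} (it is exactly the statement that the counts $\#\{T:\dim\mathfrak{I}(T)=j\}$ sum to $|L(W,V)|$), which is conceptually satisfying and fits the spirit of the paper; the paper's algebraic proof is self-contained apart from a textbook $q$-identity and establishes the formula with $q$ treated as a formal parameter, whereas your argument a priori proves it only for $q$ a prime power --- which is all the paper ever uses, so nothing is lost in context, though one should flag that restriction if the lemma were to be quoted as a general $q$-identity.
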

\begin{proof}
  We begin with the identity (see \cite[Ch. 4, Eq. 4.4]{CK2002})  
  \begin{align*}
z^d
   &=\sum_{j=0}^{d}{d \brack j}_q\prod_{l=0}^{j-1}(z-q^l)\\
   &=\sum_{j=0}^{d}q^{j(d-j)}{d \brack j}_{1/q}\prod_{l=0}^{j-1}(z-q^l)\\
   &=\sum_{j=0}^{d}q^{j(d-j)}{d \brack j}_{1/q}\prod_{i=d-j+1}^{d}(z-q^{d-i}).\\
  \end{align*}
 Since the left hand side is independent of $q$, we replace $q$ by $1/q$ to obtain
\begin{align*}
  z^d&=\sum_{j=0}^{d}q^{j(j-d)}{d \brack j}_q\prod_{i=d-j+1}^{d}(z-q^{i-d})\\
     &=\sum_{j=0}^{d}q^{j(j-2d)}{d \brack j}_q\prod_{i=d-j+1}^{d}(q^dz-q^i).\\
\end{align*}
Put $q^dz=y$ to obtain
  \begin{align*}
    y^d&=q^{d^2}\sum_{j=0}^{d}q^{j(j-2d)}{d \brack j}_q\prod_{i=d-j+1}^{d}(y-q^i)\\
       &=\sum_{j=0}^{d}q^{(d-j)^2}{d \brack j}_q\prod_{i=d-j+1}^{d}(y-q^i)\\
       &=\sum_{j=0}^{d}q^{j^2}{d \brack j}_q\prod_{i=j+1}^{d}(y-q^i)
  \end{align*}
as desired.
\end{proof}

\begin{theorem}
Let $V$ be an $n$-dimensional vector space over $\Fq$ and let $W$ be a $k$-dimensional subspace of $V$. The number of linear transformations $T:W\to V$ that can be extended to a nilpotent transformation on all of $V$ is given by
$$q^{n(k-1)}(q^n-q^k+1).$$
\end{theorem}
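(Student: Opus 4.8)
The plan is to count matrices rather than transformations. Fixing the basis $\Bn$ of $V$ extending $\Bk$ of $W$, any $\tilde T\colon V\to V$ extending $T$ has matrix ${A \brack C}$-type block form $[B\mid D]$, where $B\in M_{n,k}(\Fq)$ is the matrix of $T$ and $D\in M_{n,n-k}(\Fq)$ records the images of the extra basis vectors. Thus $T$ extends to a nilpotent transformation precisely when $B$ can be completed to a nilpotent matrix in $M_n(\Fq)$ by adjoining $n-k$ columns. First I would establish the characterization: \emph{$B$ admits a nilpotent completion if and only if $\Phi_{n,k}(B)=x^d$ for some $d$}, equivalently iff every invariant factor of $T$ is a power of $x$. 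The forward implication is immediate from Proposition~\ref{prop:invpoly}: if $\tilde T$ is nilpotent then its restriction to the $\tilde T$-invariant subspace $\IT$ is nilpotent, and since $T$ and $T|_{\IT}$ share their nonunit invariant factors, these are all powers of $x$, whence $\Phi_{n,k}(B)=\prod_i p_i=x^d$ with $d=\dim\IT$.

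For the converse I would use the block decomposition from the proof of Theorem~\ref{th:main}. Writing the matrix of $T$ in the form displayed in \eqref{partitioned}, a completion $N=[B\mid D]$ is block upper triangular with diagonal blocks $A_{11}$ (the nilpotent matrix of $T|_{\IT}$) and an $(n-d)\times(n-d)$ block $M$ whose first $k-d$ columns are ${A_{22}\brack C_{12}}$ and whose remaining columns come from the free part of $D$. Since a block upper triangular matrix is nilpotent iff its diagonal blocks are, the problem reduces to completing the zero kernel pair $(C_{12},A_{22})$ of \eqref{a22c12} to a nilpotent matrix. This is exactly the $d=0$ case of the characterization, and I expect it to be the main obstacle: it amounts to showing that a reachable (equivalently, zero kernel) pair can always be completed to a nilpotent matrix. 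I would deduce this from the controllability canonical form of the transpose pair $(A_{22}^T,C_{12}^T)$, in which the shift structure of a controllable system lets one place all eigenvalues at $0$.

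Granting the characterization, the count becomes $\sum_{d=0}^k\left|\Phi_{n,k}^{-1}(x^d)\right|$. Applying Theorem~\ref{extension} to $f=x^d$ (so $t=1$, $f_1=x$, $d_1=1$, $e_1=d$), the factor $F(q^{d_1},e_1)=F(q,d)$ cancels the $F(q,d)$ in the numerator, giving $\left|\Phi_{n,k}^{-1}(x^d)\right|={k \brack d}_q\,q^{d^2-d}\prod_{i=d+1}^k(q^n-q^i)$. The same expression arises directly from Theorem~\ref{givenU} upon replacing the count $q^{d^2}$ of all operators on $\IT$ by the count $q^{d^2-d}$ of nilpotent ones.

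Finally I would evaluate $S:=\sum_{d=0}^k{k \brack d}_q q^{d^2-d}\prod_{i=d+1}^k(q^n-q^i)$ by comparison with Lemma~\ref{identity}. Put $y=q^n$; the lemma gives $y^k=\sum_{d=0}^k{k \brack d}_q q^{d^2}\prod_{i=d+1}^k(y-q^i)$, so $y^k-S=\sum_{d=1}^k{k \brack d}_q q^{d^2-d}(q^d-1)\prod_{i=d+1}^k(y-q^i)$. Using the identity ${k \brack d}_q(q^d-1)=(q^k-1){k-1 \brack d-1}_q$ and reindexing $d\mapsto d+1$, the remaining sum extracts a factor $q$ from each term of the product and is seen, by a second application of Lemma~\ref{identity} at level $k-1$ with variable $y/q$, to equal $y^{k-1}$. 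Hence $y^k-S=(q^k-1)y^{k-1}$, and therefore $S=y^k-(q^k-1)y^{k-1}=q^{n(k-1)}(q^n-q^k+1)$, as claimed. The only genuinely delicate point is the nilpotent completion of a reachable pair in the second paragraph; everything else is bookkeeping with $q$-binomial identities.
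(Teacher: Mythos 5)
Your argument is correct, and it diverges from the paper's at exactly one point: the characterization of which $T$ extend to a nilpotent operator. The paper disposes of this in one line by citing Wimmer's theorem (via Cravo), which as an if-and-only-if statement yields both that the product of invariant factors of an extendable $T$ divides $x^n$ and, conversely, that every $T$ with $\Phi_{n,k}(B)=x^d$ admits a nilpotent completion; the converse is needed for the identity $\Delta_q(k,n)=\sum_l|\Phi_{n,k}^{-1}(x^l)|$ even though the paper only states the forward half explicitly. You instead prove the characterization from scratch: the forward direction from Proposition~\ref{prop:invpoly} exactly as the paper's other arguments would suggest, and the converse by reusing the block decomposition \eqref{partitioned} to reduce to the nilpotent completability of a zero kernel pair, which you settle via the Brunovsky/controllability canonical form of the transposed pair (valid over any field; indeed in that canonical form the completion with zero blocks is already block upper triangular with nilpotent diagonal blocks). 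This buys a self-contained proof at the cost of importing the canonical form, whereas the paper buys brevity at the cost of an external citation; your route also has the merit of making visible why the answer is $\sum_d|\Phi_{n,k}^{-1}(x^d)|$ rather than merely an upper bound. The remaining computation --- evaluating $|\Phi_{n,k}^{-1}(x^d)|={k\brack d}_q q^{d^2-d}\prod_{i=d+1}^k(q^n-q^i)$ and telescoping the sum with two applications of Lemma~\ref{identity} after the substitution $y=q^n$ and the identity ${k\brack d}_q(q^d-1)=(q^k-1){k-1\brack d-1}_q$ --- is the same as the paper's. The one step you should write out in full if you submit this is the nilpotent completion of a reachable pair; your sketch of it is right, but it is the only place where something could go wrong.
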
    
\begin{proof}
Let $\Delta_q(k,n)$ denote the number of linear transformations $T:W\to V$ which can be extended to a nilpotent transformation on $V$. It follows from Wimmer's theorem \cite[Th. 15]{Cravo2009} that the product of invariant factors of any such $T$ is a divisor of $x^n$ and is of degree at most $k$. Therefore,
\begin{align*}
 \Delta_q(k,n)&=\sum_{l=0}^{k}|\Phi^{-1}_{n,k}(x^l)| \\
              &=\sum_{l=0}^{k}|\Phi^{-1}(x^l)|{k \brack l}_q \prod_{i=l+1}^{k}(q^n-q^i)\\
          &=\sum_{l=0}^{k}q^{l(l-1)}{k \brack l}_q \prod_{i=l+1}^{k}(q^n-q^i),
\end{align*}
where we have used Theorem \ref{extension} for the second equality. We set $q^n=y$ and use Lemma \ref{identity} to see that the last expression is equivalent to
\begin{align*}
\Delta_q(k,n)&=y^k-\sum_{l=0}^{k}(q^{l^2}-q^{l(l-1)}){k \brack l}_q \prod_{i=l+1}^{k}(y-q^i)\\
             &=y^k-\sum_{l=1}^{k}q^{l(l-1)}(q^k-1){k-1 \brack l-1}_q \prod_{i=l+1}^{k}(y-q^i)\\
         &=y^k-\sum_{m=0}^{k-1}q^{m(m+1)}(q^k-1){k-1 \brack m}_q \prod_{i=m+2}^{k}(y-q^i)\\
         &=y^k-(q^k-1)\sum_{m=0}^{k-1}q^{m(m+1)}{k-1 \brack m}_q q^{k-m-1}\prod_{i=m+2}^{k}(y/q-q^{i-1})\\
        &=y^k-(q^k-1)q^{k-1}\sum_{m=0}^{k-1}q^{m^2}{k-1 \brack m}_q \prod_{j=m+1}^{k}(y/q-q^j)\\
&=y^k-(q^k-1)q^{k-1}(y/q)^{k-1}\\
&=y^k-(q^k-1)y^{k-1},
\end{align*}
where the penultimate equality again follows from Lemma \ref{identity}. We now resubstitute $y$ by $q^n$ to obtain the result.
\end{proof}



\section*{Acknowledgements}
This work was partly done while the author was a postdoctoral fellow at the Harish-Chandra Research Institute, Allahabad and partly at the Indraprastha Institute of Information Technology, Delhi.

\bibliographystyle{plain}
\bibliography{autobib}

\end{document}